\documentclass[11pt]{article}

\usepackage{amsmath, amsfonts,amssymb, amsbsy}
\newtheorem{theo}{\bf Theorem}[section]
\newtheorem{cor}{\bf Corollary}[section]
\newtheorem{defi}{\bf Definition}[section]
\newtheorem{nota}{\bf Notation}[section]
\newtheorem{lem}{\bf Lemma}[section]

\begin{document}
\begin{center}
{\bf  Triple arrays from ovals in finite projective planes}

\vskip10pt
  Sunanda Bagchi\footnote{Retired professor of Indian Statistical Institute} and Bhaskar Bagchi\footnote{Retired professor of Indian Statistical Institute}\footnote{Corresponding author} \\
   Theoretical Statistics and Mathematics Unit\\
Indian Statistical Institute\\ Bangalore 560059, India\\
e-mails : sunandab355@gmail.com,  bhaskarbagchi53@gmail.com
\end{center}
AMS Mathematics Subject Classification (2020): Primary 05B30, Secondary 05B05, 05B25.

Key words: Triple Arrays, Finite Projective Planes, Ovals.

This paper has been posted in: arxiv.2607.20275[math.CO].
\begin{abstract}
In this paper we prove that whenever a projective plane of odd order $n$ contains an oval, it may be used to construct a triple array with $n+1$ rows, $n^2$ columns and $n(n+1)$ symbols. In particular, for any odd prime power $s$, we use the projective plane over the field of order $s$ to construct an explicitly given  $(s+1) \times s^2$ triple array on $s(s+1)$ symbols. This is only the third infinite series of non-trivial triple arrays (up to transposition) to be found. Our construction proves a recent conjecture of Gordeev and Ohman (2026) in the case of odd prime powers. 
\end{abstract}

\section{Introduction}

For integers $p \geq 2, \;  q \geq 2, \; v \geq 1$, Preece et al \cite{pree} defined a $p \times q$ \textbf{triple array} $A$ on $v$ symbols  as an allocation of   $v$ symbols to the $pq$ positions of a $p \times q$ rectangular grid such that (i) $A$ is \textbf{binary} (i.e., the symbols in each row and each column are distinct), (ii) $A$ is \textbf{equi-replicate} (i.e. there is a constant $r$, the \textbf{replication number} of the array, such that each symbol occurs in exactly $r$ positions of the array; clearly, we must have $r= (pq)/v$) and, further, there are three constants $\lambda_{RR}, \lambda_{CC} \; \rm{and} \; \lambda_{RC}$ such that the array $A$ satisfies the following three conditions:

(RR) any two distinct rows of $A$ have exactly $\lambda_{RR}$ common symbols,

(CC) any two distinct columns of $A$ have exactly $\lambda_{CC}$ common symbols,

(RC) any row and any column of $A$ have exactly $\lambda_{RC}$ common symbols.

These three constants may be called the \textbf{intersection numbers} of the triple array.

Notice that the transpose of a $p \times q$ triple array is a $q \times p$ triple array.  Preece et al \cite{pree} constructed $s \times (s+1)$ triple arrays on $2s$ symbols (dubbed the Paley triple arrays) for all odd prime powers $s > 3$. This generalizes previous constructions due to Agarwal \cite{agar} (the case $s \equiv 3 \pmod 4$) and Bagchi \cite{su2} (the case $s \equiv 5 \pmod 8$). Nilson and Cameron \cite{nil} used Menon difference sets to construct $m(2m-1) \times m(2m+1)$ triple arrays on $(2m-1)(2m+1)$ symbols for all integers $m>1$ such that the square free part of $m$ divides $6$.

Since $v$ divides $pq$ in any $p \times q$ triple array on $v$ symbols, we must have $\max{(p,q)} \leq v \leq pq$. We say that the triple array is \textbf{non-trivial} if $\max{(p,q)} < v < pq$. (Remember that the conditions $p>1, q > 1$ are built into our definition of triple arrays.) Prior to this paper, the series  mentioned in the previous paragraph were  the only known infinite series of non-trivial triple arrays, up to transposition. In this paper we use the classical finite projective planes $PG(2,s)$ to construct $(s+1) \times s^2$ triple arrays on $s(s+1)$ symbols for all odd prime powers $s$. The case where $s$ is a power of $2$ remains open.

If $A$ is a binary array then the \textbf{row design} $D_R^A$ and the \textbf{column design} $D_C^A$ of $A$ are the block designs defined as follows. Both designs have the $v$ symbols of $A$ as the points. The row design has, corresponding to each of the $p$ rows of $A$, a block consisting of the un-ordered set of $q$ symbols occurring in this row. The \textbf{column design} of $A$ is defined analogously using the columns. (Some authors use the duals of these designs as their definition of the row- and column- designs.) These two designs together are called the \textbf{marginals} of the binary array.

Recall that a $2-(v,k,\lambda)$ \textbf{design} is an incidence system on $v$ points with $k$ points per block such that any two distinct points are together in $\lambda$ blocks. An usual counting argument shows that any such design has ancillary parameters $b,r$ such that $b$ is the total number of blocks and $r$ is the number of blocks through each point. These ancillary parameters of a $2$-design are given in terms of the main parameters $v,k,\lambda$ by the equations $bk=rv$ and $r(k-1)=\lambda(v-1)$. The number $r-\lambda$ is called the \textbf{order} of the design. The \textbf{complementary design} $\overline{D}$ of a $2-(v,k,\lambda)$ design $D$ is the $2-(v,v-k, v-2k + \lambda)$ design whose blocks are the complements (in the full point set of $D$) of the blocks of $D$.\, $D$ and $\overline{D}$ have the same order. All results on $2$-designs used here may be found in the beautiful monograph \cite{cam} by Cameron and van Lint.

 As an immediate consequence of the definition of triple arrays, we have the following well known result.
\begin{theo} \label{theo 1.1} : For any triple array A , the dual of $D_R^A$ is a $2-(p,(pq)/v, \lambda_{RR})$ design and the dual of $D_C^A$ is a $2-(q, (pq)/v, \lambda_{CC})$ design.    
\end{theo}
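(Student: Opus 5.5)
The plan is to unwind the definition of the dual design and check the defining properties of a $2$-design one by one. The dual of $D_R^A$ has as its points the $p$ rows of $A$ (more precisely, the $p$ blocks of $D_R^A$). For each of the $v$ symbols $x$ it has one block, namely the set of rows of $A$ in which $x$ occurs. Incidence is the natural one: row $i$ lies in the block of $x$ exactly when $x$ occurs in row $i$.

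First I will determine the block size. Since $A$ is binary, a symbol $x$ occurs at most once in each row. So the number of rows containing $x$ equals the number of positions occupied by $x$, which by equi-replication is $r$. Counting the $pq$ positions of $A$ by symbols gives $rv = pq$, so $r = (pq)/v$. Thus every block of the dual design has exactly $(pq)/v$ points.

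Next I will check the pairwise balance. Take two distinct points of the dual design, that is, two distinct rows $i, i'$ of $A$. The blocks containing both are the blocks indexed by symbols $x$ occurring in both row $i$ and row $i'$. By condition (RR) the number of such symbols is exactly $\lambda_{RR}$. Because there are $p$ points, the dual of $D_R^A$ is a $2-(p,(pq)/v,\lambda_{RR})$ design.

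The column statement follows by the same argument with columns in place of rows. Here binarity of the columns gives block size $r=(pq)/v$, and condition (CC) gives $\lambda_{CC}$ common blocks for any two of the $q$ points. Alternatively, I can apply the row case to the transpose of $A$, which is a $q\times p$ triple array whose row design is $D_C^A$.

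There is no real obstacle here; the only point requiring care is the use of binarity. Without it, the number of distinct rows containing $x$ could be smaller than the replication number $r$, and the blocks of the dual design would not have constant size.
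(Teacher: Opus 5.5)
Your proof is correct and is the same definitional unwinding the paper has in mind: it gives no argument beyond calling the result an immediate consequence of the definition of triple arrays. Binarity plus equi-replication gives every block of the dual constant size $r=(pq)/v$, and (RR), respectively (CC), gives the pairwise balance, exactly as you argue.
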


  Invoking the relations between the parameters of $2$-designs given in the previous paragraph, one concludes (See, for example, the discussion in Gordeev and Ohman \cite{gor2}, section 2.3.)

    \begin{cor}The intersection numbers of any $p  \times q$ triple array on $v$ symbols are given by:

   $$\lambda_{RR} =\frac{q(pq-v)}{(p-1)v}, \;\; \lambda_{CC}= \frac{p(pq-v)}{(q-1)v}, \;\; \lambda_{RC} = \frac{pq}{v}.$$ 
   \end{cor}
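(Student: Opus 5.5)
We derive all three formulas from Theorem \ref{theo 1.1} together with the parameter relations $bk=rv$ and $r(k-1)=\lambda(v-1)$ for $2$-designs, plus one direct count for $\lambda_{RC}$. Throughout, write $r=(pq)/v$ for the replication number of $A$.

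\emph{The intersection number $\lambda_{RR}$.} By Theorem \ref{theo 1.1}, the dual of $D_R^A$ is a $2-(p,r,\lambda_{RR})$ design. Its points are the $p$ rows of $A$, and its blocks correspond to the $v$ symbols: the block of a symbol is the set of $r$ rows containing it. A row contains $q$ distinct symbols, so each point of this dual design lies in exactly $q$ blocks. Hence its ancillary replication number is $q$, and the relation $r(k-1)=\lambda(v-1)$ reads
$$q\,(r-1)=\lambda_{RR}(p-1).$$
Substituting $r=(pq)/v$ gives
$$\lambda_{RR}=\frac{q\bigl(\frac{pq}{v}-1\bigr)}{p-1}=\frac{q(pq-v)}{(p-1)v}.$$
As a consistency check, $bk=rv$ holds in the form $v\cdot r=p\cdot q$.

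\emph{The intersection number $\lambda_{CC}$.} The argument is the same with rows and columns interchanged. The dual of $D_C^A$ is a $2-(q,r,\lambda_{CC})$ design in which each point (column) lies in $p$ blocks, since a column contains $p$ distinct symbols. Therefore $p(r-1)=\lambda_{CC}(q-1)$, which gives $\lambda_{CC}=\frac{p(pq-v)}{(q-1)v}$. Alternatively, one can simply apply the $\lambda_{RR}$ formula to the transpose of $A$, which is a $q\times p$ triple array.

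\emph{The intersection number $\lambda_{RC}$.} Fix a row $R$ and count the pairs $(x,C)$ with $C$ a column and $x$ a symbol lying in both $R$ and $C$. Summing over the $q$ columns gives $q\lambda_{RC}$. On the other hand, each of the $q$ symbols $x$ of $R$ occurs in exactly $r$ positions. These positions lie in $r$ distinct columns, because row $R$ is binary and so distinct occurrences of $x$ lie in distinct rows; moreover, since columns are binary, each occurrence lies in a different column. So each $x\in R$ contributes exactly $r$ columns, and summing over $R$ gives $qr$. Equating the two counts, $q\lambda_{RC}=qr$, so $\lambda_{RC}=r=pq/v$.

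There is no real obstacle in this argument. The only point needing care is the $\lambda_{RC}$ count, where binarity of both rows and columns guarantees that the $r$ occurrences of a symbol lie in $r$ distinct columns, so that each symbol of $R$ is counted exactly $r$ times.
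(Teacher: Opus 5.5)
Your proof is correct and takes the paper's route: Theorem 1.1 together with $r(k-1)=\lambda(v-1)$ (the dual designs have replication number $q$, resp.\ $p$) gives $\lambda_{RR}$ and $\lambda_{CC}$, and your double count for $\lambda_{RC}$ is the same averaging argument the paper gives in Definition 1.1. One small slip: the $r$ occurrences of $x$ lie in $r$ distinct columns because the columns are binary, which you also say; row binarity is not the reason.
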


   Since the intersection numbers of a triple array are integers, it follows that we have:
    \begin{cor} The parameters $v,p,q$ of a triple array must satisfy the following divisibility conditions : (i) $(p-1)v$ divides $q(pq-v)$, (ii) $(q-1)v$ divides $p(pq-v)$ and (iii) $v$ divides $pq$.
   \end{cor}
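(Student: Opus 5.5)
This follows directly from the preceding corollary, which expresses the three intersection numbers as explicit rational functions of $p,q,v$:
$$\lambda_{RR} =\frac{q(pq-v)}{(p-1)v}, \qquad \lambda_{CC}= \frac{p(pq-v)}{(q-1)v}, \qquad \lambda_{RC} = \frac{pq}{v}.$$
By definition, $\lambda_{RR},\lambda_{CC},\lambda_{RC}$ are counts of common symbols, so each is a nonnegative integer. The proof therefore only has to observe that each displayed fraction is an integer and read this as a divisibility statement. The quantities $pq-v$, $p-1$, $q-1$ and $v$ are integers, and the denominators $(p-1)v$ and $(q-1)v$ are positive because $p,q\ge 2$ and $v\ge 1$. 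Hence integrality of $\lambda_{RR}$ is exactly condition (i), and integrality of $\lambda_{CC}$ is exactly condition (ii).

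Condition (iii) follows from $\lambda_{RC}=pq/v$ being an integer. It also follows independently from equi-replication: counting the $pq$ cells of the array by symbol gives $pq=rv$.

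The only point that needs care is that the corollary's formulas are genuinely valid, in particular that the dual row design of Theorem \ref{theo 1.1} has the stated parameters. We may assume this, since it is stated earlier in the paper. If one wanted to check it, count pairs consisting of an unordered pair of rows and a symbol common to both. Each of the $v$ symbols lies in $r=pq/v$ distinct rows (the array is binary), so the count is $v\binom{r}{2}$. Counting over row pairs instead gives $\binom{p}{2}\lambda_{RR}$. Solving,
$$\lambda_{RR}=\frac{r(r-1)v}{p(p-1)}=\frac{q(pq-v)}{(p-1)v}.$$
The same count over column pairs gives $\lambda_{CC}$. So there is no real obstacle here; the corollary is just a restatement of integrality.
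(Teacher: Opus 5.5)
Your proposal is correct and is the paper's own argument: the paper obtains this corollary in one line from the integrality of the intersection numbers given by the explicit formulas of the preceding corollary. Your double-counting check of $\lambda_{RR}=q(pq-v)/((p-1)v)$ and your alternative derivation of (iii) from $pq=rv$ are both correct, though not needed.
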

   Recall that a \textbf{linked block design} (LBD) is defined to be a block design whose dual (obtained by interchanging points and blocks) is a  $2$-design. Thus the marginals of any triple array form a pair of LBDs on a common set of $v$ points, with $p$ and $q$ blocks of sizes $q$ and $p$ respectively, satisfying the following condition: any block of the  first  LBD meets  any  block  of  the  second  in  $\lambda_{RC} \, \rm{points.} $
    
     Essentially following \cite{gor2}, we introduce:
      \begin{defi} A $p \times q$ \textbf{un-ordered triple array} on $v$ symbols is a pair $(D_R,D_C)$ of LBD's satisfying the following conditions: (i) $D_R$ and $D_C$ have a  common point-set of size  $v$, (ii) $D_R$ has  $p$ blocks of size $q$ each and $D_C$ has  $q$ blocks of size $p$ each, and (iii) there is a constant $\lambda_{RC}$ such that each block of $D_R$ meets each block of $D_C$ in a set of size $\lambda_{RC}$. (Using (i) and (ii), it is easy to see that the average intersection size between blocks of $D_R$ and $D_C$ is $(pq)/v$. Therefore, (iii) implies that we must have $\lambda_{RC}= (pq)/v$.)
      \end{defi}
      
      Thus, by Theorem 1.1, the pair $(D_R^A,D_C^A)$ of marginals of any $p \times q$ triple array $A$ on $v$ symbols is a  $p \times q$ un-ordered triple array on $v$ symbols. We have the following trivial converse to this observation.
      \begin{theo} The pair of marginals of a rectangular array $A$ is an un-ordered triple array if and only if $A$ is a triple array. \end{theo}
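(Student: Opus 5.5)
The proof is a direct translation between the array conditions and the design conditions, one condition at a time. Throughout, $A$ is a $p \times q$ binary array on $v$ symbols, so that its marginals $D_R^A$ and $D_C^A$ are defined. They are block designs on the common set of $v$ symbols, with $p$ blocks of size $q$ and $q$ blocks of size $p$ respectively. Conditions (i) and (ii) of the definition of an un-ordered triple array therefore hold automatically, and only the LBD property of each marginal and condition (iii) remain to be compared with the triple array axioms.

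For the ``if'' direction, suppose $A$ is a triple array. By Theorem 1.1 the duals of $D_R^A$ and $D_C^A$ are $2$-designs, so both marginals are LBDs. Next, the intersection of the block of $D_R^A$ from row $i$ with the block of $D_C^A$ from column $j$ is exactly the set of symbols common to row $i$ and column $j$. Axiom (RC) says this set has constant size $\lambda_{RC}$, which is condition (iii). Hence $(D_R^A, D_C^A)$ is an un-ordered triple array.

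For the ``only if'' direction, suppose $(D_R^A, D_C^A)$ is an un-ordered triple array; $A$ is binary by hypothesis. I would unpack the dual of $D_R^A$ explicitly. Its points are the $p$ rows, and for each symbol $x$ it has one block, namely the set of rows containing $x$. Since $A$ is binary, the size of this block equals the number of positions of $A$ occupied by $x$.

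\begin{itemize}
\item Because the dual of $D_R^A$ is a $2$-design, it has constant block size. So every symbol occurs the same number $r$ of times, and $A$ is equi-replicate.
\item Two rows lie together in the block of $x$ exactly when $x$ is common to both rows. So the constant pair-count $\lambda$ of this $2$-design is precisely the constant $\lambda_{RR}$ in (RR).
\item The same reading of the dual of $D_C^A$ gives (CC).
\item Condition (iii), read as in the first direction, gives (RC).
\end{itemize}

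There is no real obstacle here, only bookkeeping. The one point needing care is that equi-replication is not stated separately in the definition of an un-ordered triple array. It must be extracted from the constant block size of the dual $2$-design, and that step uses binarity, so that ``number of rows containing $x$'' equals ``number of occurrences of $x$''.
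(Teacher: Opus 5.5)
Your proof is correct and follows essentially the same argument as the paper. The direct part uses Theorem 1.1 together with (RC) $\Rightarrow$ (iii). The converse reads (RR), (CC) and equi-replication (via binarity) off the dual $2$-designs and (RC) off condition (iii). The only cosmetic difference is that the paper states binarity of $A$ as a conclusion, whereas you assume it because marginals are only defined for binary arrays. Either reading works, since binarity also follows from the block sizes required in condition (ii).
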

      
      Indeed, if $A$ is a $p \times q$ array on $v$ symbols such that the marginals of $A$ are LBDs, then clearly $A$ is binary and satisfies (RR) and (CC). Since these LBDs are equireplicate (necessarily with the common replication number $r=(pq)/v$) and $A$ is binary, it follows that $A$ is equi-replicate with replication number $r$.  Finally, the condition (iii) on un-ordered triple arrays ensures that  any such array $A$ satisfies condition (RC) as well. It is easy to see that the divisibility conditions of Corollary 1.2 already hold for the parameters $p,q,v$ of an un-ordered triple array.

       Following \cite{gor2} again, we introduce:
       \begin{defi} An un-ordered triple array $(D_R, D_C)$ is said to be orderable if there is a triple array $A$ with marginals $D_R^A=D_R$, $D_C^A=D_C$. \end{defi}
       
       With these definitions, Gordeev and Ohman \cite{gor2} split the construction problem of triple arrays into two sub-problems: (i) construct un-ordered triple arrays, and (ii) given an un-ordered triple array, determine if it is orderable.

    The first author used matrix methods to prove \cite{su2} that the parameters of a triple array must satisfy the  inequality $v \geq p + q - 1$. This was independently reproved in  McSorley et al \cite{mcsor}. Recently Gordeev et al \cite{gor1} used a counting argument to give a new proof of this inequality. Any of these proofs goes through verbatim to show that any $p \times q$ unordered triple array $(D_R, D_C)$ on $v$ symbols satisfies the same inequality. A triple array satisfying the equality $v=p+q-1$ is said to  be \textbf{an extremal triple array}. Likewise we may  define an \textbf{ extremal un-ordered triple array} to be one satisfying this equality. Bagchi and Shah \cite{su1} proved that any extremal triple array, when used as a statistical design for the elimination of two-way heterogeneity, satisfies very strong optimality properties. Thus the construction of extremal triple arrays is important for their applications to design of statistical experiments. Notice that the three known series of triple arrays (including the series constructed here) are extremal. Gordeev and Ohman \cite{gor2} used a computer aided search to find finitely many non-extremal triple arrays.
   
  Recall that a $2-(v,k, \lambda)$ design $D$ is said to be \textbf{symmetric} if the dual design $D^*$ (obtained from $D$ by interchanging points and blocks) is also a $2-$design (necessarily with the same parameters $v,k, \lambda$). A $2-(v,k, \lambda)$ design is symmetric if and only if its parameters satisfy the relation $\lambda (v-1)= k(k-1)$. The complement $\overline{D}$ of a symmetric design is again symmetric.

   The following construction of un-ordered triple arrays is due to Agarwal \cite{agar}.
  
  \begin{nota} Let $D$ be a symmetric $2-(v, k,\lambda)$ design of order $\geq 2$ and let $x$ be a point of $D$. We denote by $\mathbb{A}(D,x)$ the pair $(D_R, D_C)$ of block designs where (i) the blocks of $D_R$ are the complements (in the point set of $D$) of the blocks of $D$ through $x$, and (ii) the blocks of $D_C$ are the blocks of $D$ not passing thrugh $x$. Since the dual of $D$ is a $2-$design with the same parameters, it is easy to verify that $\mathbb{A}(D,x)$ is an extremal $k \times (v-k)$ un-ordered triple array on $v-1$ symbols. Its intersection numbers are $\lambda_{RR} = v-2k+\lambda,\, \lambda_{CC}= \lambda, \,\lambda_{RC}=k-\lambda$. \end{nota} 
   
   McSorley et al \cite{mcsor} showed that with any $p \times q$ extremal triple array, we can attach a symmetric $2-(p+q, p, \lambda)$ design (where, necessarily, $\lambda= p(p-1)/(p+q-1)$). Their argument goes through verbatim to prove the direct part of the following stronger result. Moreover, running this argument backwards, one obtains a proof of its converse part.

   \begin{theo} \label{theo 1.3} If $(D_R, D_C)$ is a $p\times q$ extremal un-ordered triple array then there is a symmetric $2-(p+q, p, \lambda)$ design $D$of order $\geq 2$ and a point $x$ of $D$ such that $(D_R,D_C)= \mathbb{A}(D,x)$. Conversely, if $D$ is a symmetric $2$-design of order $\geq 2$ and $x$ is a point of $D$ then $ \mathbb{A}(D,x)$  is an extremal un-ordered triple array.
   \end{theo}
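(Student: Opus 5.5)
Both directions rest on a construction: from an extremal un-ordered triple array we build a symmetric design by adding a new point, and conversely we remove a point from a symmetric design.

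\emph{Direct part.} Let $(D_R,D_C)$ be a $p\times q$ extremal un-ordered triple array on a set $V$ of $v=p+q-1$ symbols, with blocks $R_1,\dots,R_p$ of size $q$ and $C_1,\dots,C_q$ of size $p$. We adjoin a new point $x$ and set $X=V\cup\{x\}$, so $|X|=p+q$. On $X$ we take the $p+q$ blocks
$$B_i=X\setminus R_i \;(1\le i\le p), \qquad C_j \;(1\le j\le q).$$
Each $B_i=(V\setminus R_i)\cup\{x\}$ has size $v-q+1=p$, and each $C_j$ has size $p$. The point $x$ lies in exactly the blocks $B_i$, so once $D$ is shown to be a symmetric design we get $\mathbb{A}(D,x)=(D_R,D_C)$ directly from Notation 1.1.

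To show $D$ is a symmetric $2$-design, it is easiest to check the dual condition: any two distinct blocks meet in a constant number $\lambda$ of points. For a square incidence structure with constant block size $p$, this gives a symmetric design by the standard fact that the dual of a symmetric design is symmetric (via the Fisher/Ryser argument on the incidence matrix). We compute the three kinds of block intersections using Corollary 1.1 with $v=p+q-1$:
\begin{itemize}
\item $|C_j\cap C_{j'}|=\lambda_{CC}=p(pq-v)/((q-1)v)$.
\item $|B_i\cap C_j|=|C_j\setminus R_i|=p-\lambda_{RC}=p-pq/v$.
\item $|B_i\cap B_{i'}|=1+|V|-2q+\lambda_{RR}=p-q+\lambda_{RR}$.
\end{itemize}
Since $pq-v=(p-1)(q-1)$, we get $\lambda_{CC}=p(p-1)/v$, $\lambda_{RR}=q(q-1)/v$, and $p-pq/v=p(v-q)/v=p(p-1)/v$. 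Moreover
$$p-q+\frac{q(q-1)}{v}=\frac{(p-q)(p+q-1)+q^2-q}{v}=\frac{p^2-p}{v}.$$
So all three intersections equal $\lambda=p(p-1)/(p+q-1)$. This is exactly the relation $\lambda(|X|-1)=p(p-1)$ required of a symmetric $2$-$(p+q,p,\lambda)$ design.

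The order of $D$ is $p-\lambda=pq/v=\lambda_{RC}$. We need $\lambda_{RC}\ge 2$. If $\lambda_{RC}=1$, then $pq=v=p+q-1$, i.e. $(p-1)(q-1)=0$, which contradicts $p,q\ge 2$. Hence the order is at least $2$.

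The one step needing care is confirming that $D$ is a genuine design with no degeneracies. We need $\lambda$ to be an integer, which holds because it equals $\lambda_{CC}$. We also want the blocks to be distinct: two equal blocks would meet in $p$ points, but they meet in $\lambda<p$ points since $p-\lambda\ge 2$. (If repeated blocks were allowed in designs this would not matter anyway.)

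\emph{Converse.} Let $D$ be a symmetric $2$-$(v',k,\lambda)$ design with $v'-k\ge 2$ and $k-\lambda\ge 2$. Notation 1.1 asserts that $\mathbb{A}(D,x)$ is an un-ordered triple array with $p=k$, $q=v'-k$, and $v=v'-1=p+q-1$, so it is extremal. To confirm the LBD and intersection claims, we use the dual $2$-design property (any two blocks of $D$ meet in $\lambda$ points) together with $r=k$:
\begin{itemize}
\item Two blocks of $D_C$ meet in $\lambda$ points.
\item Two blocks of $D_R$ meet in $v'-1-(2k-\lambda-1)=v'-2k+\lambda$ points: the two blocks of $D$ through $x$ have union of size $2k-\lambda$, and removing $x$ leaves $2k-\lambda-1$ of the $v'-1$ symbols.
\item A block of $D_R$ and a block of $D_C$ meet in $k-\lambda$ points.
\end{itemize}
Both $D_R$ and $D_C$ have constant block sizes, so they are LBDs by Theorem 1.1 applied to their duals, and the conditions $p,q\ge 2$ follow from the order assumptions.
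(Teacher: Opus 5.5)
Your proof is correct and follows the route the paper indicates: the McSorley et al.\ argument, adjoining a point $x$ to the complements of the row blocks, keeping the column blocks, checking that every pairwise block intersection equals $p(p-1)/(p+q-1)$ and invoking Ryser's theorem, then running the construction backwards for the converse. One small repair is needed in the converse. The LBD property also requires constant replication, which Theorem 1.1 (a statement about triple arrays) does not give you, but a direct count does: each symbol $y\neq x$ lies in exactly $k-\lambda$ blocks of $D_R$ (the blocks through $x$ but not $y$) and in exactly $k-\lambda$ blocks of $D_C$. Likewise, $v'-k\ge 2$ is not a hypothesis; it follows from $k-\lambda\ge 2$ via $\lambda(v'-1)=k(k-1)$.
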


   In terms of the language and notation introduced above, a famous conjecture of Agarwal \cite{agar} may be stated as follows (this is stated as Conjecture 4.8 in Gordeev and Ohman \cite{gor2}):
   
   \textbf{Agarwal's conjecture}: Except when $(p,q)=(3,4)\, \rm{or} \, (4,3)$, any $p \times q$ extremal un-ordered triple array is orderable. That is, if the symmetric design $D$ is  of order $>2$ then, for any point $x$ of $D$, there is a (extremal) triple array $A$ such that the pair of marginals of $A$ is $\mathbb{A}(D,x)$.
   
    Notice that if the pair of marginals of an array $A$ is $\mathbb{A}(D,x)$, then $\mathbb{A}(\overline{D},x)$ is the pair of marginals of the transposed array $A^T$. Thus, $\mathbb{A}(D,x)$ is orderable if and only if $\mathbb{A}(\overline{D},x)$ is orderable.
    
    Up to isomorphism, the only symmetric $2$-designs of order $2$ are the Fano plane (i.e., the unique $2-(7,3,1)$ design) and its complement (the $2-(7,4,2)$ design). Using this well known fact in tandem with Theorem 1.3, it is not hard to confirm that $(p,q)=(3,4), (4,3)$ are genuine exceptions to this conjecture : there is no triple array with these parameters.
   
   Agarwal's conjecture is  wide open. In this paper we address the case $\lambda =1$ of Agarwal's conjecture. Recall that the\textbf{ finite projective planes} are just the symmetric $2$-designs with $\lambda=1$. Thus a projective plane of order $n$ is nothing but a $2-(n^2+n+1, n+1, 1)$ design. As a special case of Theorem 1.3, the marginals of any $(n+1) \times n^2$ extremal triple array must be of the form $\mathbb{A}(\Pi, \infty)$ for some finite projective plane $\Pi$ of order $n$ and some point $\infty$ of $\Pi$.  By Agarwal's conjecture, for $n>2$,  extremal triple arrays with these parameters ought to be co-existent with projective planes of order $n$. (As already remarked, they do not exist for $n=2$.) According to a longstanding and widely believed folklore conjecture, projective planes of order $n$ should not exist unless $n$ is a prime power. Thus, it is safe to conjecture that such arrays exist only for prime powers $n$.
    
    An \textbf{oval} $C$ in a projective plane $\Pi$ of order $n$ is a set of $n+1$ points no three of which are collinear. That is, any line $\ell$ of $\Pi$ meets $C$ in $0,1$ or $2$ points. The line $\ell$ is said to be a passant, tangent or a secant line to $C$, accordingly. It is easy to see that each point of an oval $C$ is on a unique tangent line, so that there is a total of $n+1$ tangent lines to $C$. When the order $n$ of $\Pi$ is odd, no three of the tangent lines are concurrent, i.e., the set $C^*$ consisting of the tangent lines to $C$ is an oval in the dual projective plane $\Pi^*$. Using this geometry, we prove the main result (Theorem 2.1 below) of this paper: if $\Pi$ is a projective plane of \textbf{odd order} $n$, and $\infty$ is a point of $\Pi$ such that $\infty \in C$ for some oval $C$ of $\Pi$, then the un-ordered triple array $\mathbb{A}(\Pi, \infty)$ is orderable. Hence there is an $(n+1) \times n^2$ extremal triple array whenever $n$ is an odd number such that there is a projective plane of order $n$ containing at least one oval.\footnote{We do not know if there are finite projective planes which contain no ovals!}  It may be noted that the orders of all the known finite projective planes are prime powers. 
   
   For prime powers $s$, let $PG(2,s)$ denote the projective plane (of order $s$) over the field of order $s$. When $s$ is odd, a famous theorem of Segre \cite{seg} states that the only ovals in $PG(2,s)$ are the conics (i.e., the zero sets of non-degenerate ternary quadratic forms, viewed projectively). There are $s^2(s^3-1)$ such ovals, and each point of $PG(2,s)$ is on $s^2(s^2-1)$ of them. (When $s>4$ is a power of two, there are even more ovals in $PG(2,s)$.) Therefore the main result of this paper implies that whenever $s$ is an odd prime power, $\mathbb{A}(PG(2,s), \infty)$ is orderable for any point $\infty$ of $PG(2,s)$, and hence there is an $(s+1) \times s^2$ extremal triple array. (Since the automorphism group of $PG(2,s)$ is transitive on points, the choice of the base point $\infty$ is irrelevant here.) This proves  Conjecture 9.1 of Gordeev and Ohman \cite{gor2} in the affirmative in case of odd prime powers. 
   
   The geometric behaviour of ovals in projective planes of even order is drastically different. In this case, the tangents to an oval pass through a common point (the nucleus of the oval). As a result, our construction fails dramatically for even orders. For a while we suspected that $(n+1) \times n^2$ extremal triple arrays might not exist for any even number $n$. However, in \cite{gor1}, the authors report finding  a large number of non-isomorphic arrays with these parameters for $n=4$ via a computer aided search.

\section{Construction}
 We recall (see Bailey et al \cite{bai}) that a \textbf{double array} (respectively \textbf{sesqui-array}) is a binary equi-replicate array satisfying conditions (RR) and (CC) (respectively (RR) and (RC)) in the definition of a triple array, but not necessarily the remaining condition in this definition. We begin by observing that it is relatively easy to construct double arrays and sesqui-arrays with our parameters, but these methods never yield triple arrays. Thus, the construction of the triple arrays requires deeper methods.

Recall that a $(v,k, \lambda)$ \textbf{difference set} $D$ is a subset of size $k$ in a finite group $G$ of order $v$ (written additively, even though in general $G$ may not be abelian) such that every non-zero element of $G$ can be written as the difference of two elements of $D$ in exactly $\lambda$ ways. The \textbf{development} of $D$ (denoted by $\rm{dev}(D)$) is the block design (with point set $G$) whose blocks are the $v$ translates $g+D, g \in G$, of $D$. Thus $D$ is a $(v,k,\lambda)$ difference set if and only if $\rm{dev}(D)$ is a symmetric $2-(v,k, \lambda)$ design (admitting $G$ as a regular group of automorphisms). The difference set is called abelian (respectively cyclic) if the ambient group is abelian (respectively cyclic). We say that the difference set admits $-1$ as a multiplier if the set $-D := \{-x : x \in D \}$ is a translate of $D$.  

Given a $(v,k, \lambda)$ difference set $D$ in a finite group $G$, define the $k \times (v-k)$  array $A^D$ (with rows and columns indexed by $D$ and $G\setminus D$ respectively) such that its entries are $A^D(x,y) = x-y, \, x \in D, y \in G\setminus D$. In Nilson and Cameron \cite{nil}, the authors show that (a) $A^D$ is always a $k \times (v-k)$ double array on $v-1$ symbols, and (b) when $D$ is abelian, $A^D$ is a (extremal) triple array if and only if $D$ admits $-1$ as a multiplier.

It is well known that, whenever the square free part of an integer m divides $6$, there is a $(4m^2, 2m^2-m, m^2-m)$ abelian difference set admitting $-1$ as a multiplier. The above construction applied to these difference sets yields the Nilson-Cameron series of extremal triple arrays. McFarland \cite{mcfar} constructed (as a special case of a remarkable general construction of difference sets) a $(4000, 775, 150)$ abelian difference set with multiplier $-1$. This yields a sporadic example of a $775 \times 3225$ extremal triple array.

A famous construction of  Singer \cite{sing} yields a cyclic difference set $D_s$ such that $\rm{dev}(D_s)=PG(2,s)$ for any prime power $s$. Applying the Nilson-Cameron construction to these difference sets yield $(s+1) \times s^2$ double arrays on $s^2+s$ symbols for all prime powers $s$. However, cyclic difference sets never admit $-1$ as a multiplier (see \cite{bau} for example). Therefore this construction never yields triple arrays.

\textbf{The construction of a  sesqui- array}
  
We begin by observing that, for any positive integer $n$, it is fairly easy to construct $(n+1) \times n^2$ sesqui-arrays on $n(n+1)$ symbols and intersection numbers $\lambda_{RR}=n(n-1), \; \lambda_{RC}=n$. To this end, let's put $I = \{ 0,1, \cdots, n \}$ and $J= \{ 1, 2, \cdots , n \}$. Consider the $(n+1) \times n^2$ array $B$ (with entries from $I \times J$ and rows and columns indexed by $I$ and $J \times J$ respectively) whose entries are $B(i, (j,k)) := (i+j, k)$ \; ($i \in I, (j,k) \in J \times J$). Here, the addition in the first co-ordinate is modulo $n+1$.  Then it is easy to verify that $B$ is indeed a sesqui-array with parameters as stated. However, $B$ is never a triple array. Any two distinct columns of $B$ have $0$ or $n+1$ common symbols, and both intersection numbers occur. Thus, to create triple arrays with these parameters, we need deeper ideas. We succeed only when $n$ is an odd prime power.

\textbf{Construction of a triple array.}

 We begin with:

\begin{nota} In the following, $\Pi$ is a finite projective plane, say of odd order $n$. Let $\infty$ be a point of $\Pi$ and let $C$ be an oval of $\Pi$ such that $\infty \in C$. Let $\ell_\infty$ be the tangent line to $C$ at $\infty$. Also, let $\ell_i, \, i \in \ell_\infty$, be the $n+1$ tangent lines to $C$. We choose this indexing in such a way that $i \notin \ell_i$ for all $i \neq \infty$.

 For $i,j \in \ell_\infty$, we define the point $x_{ij}$ as follows. (i) $x_{ii}$ is the point of $C$ where $\ell_i$ is tangent, and (ii) for $i \neq j$, $x_{ij}=x_{ji}$ is the common point of $\ell_i$ and $\ell_j$. Thus, $x_{\infty, \infty}=\infty$ and (since each point of $C$ is on a unique tangent), we have $x_{ij} \notin C \,\text{for}\, i \neq j$ and
  $$C=\{ x_{ii}: i \in \ell_\infty \}.$$ 
   
   Also, since each point outside $C$ is in $0$ or $2$ tangents, it follows that for each $i \in \ell_\infty$,
    
    $$\ell_i= \{x_{ij}: j \in \ell_\infty \}.$$

  let's define the line $m_{ij}$ through $\infty$ as follows. (i) $m_{i, \infty}=m_{\infty, i}=\ell_\infty$ and (ii) for $i \neq \infty, j \neq \infty$, $m_{ij}$ is the line joining the points $\infty$ and $x_{ij}$. (In the latter case, we have $x_{ij} \neq \infty$, so that this is well defined.)   
\end{nota}

In terms of this notation, we have
\begin{lem} Let $\Pi$ be a projective plane of odd order $n$. Then, 

(a) $m_{ii}, i \in \ell_\infty,$ are all the lines of $\Pi$ through the point $\infty$, and

(b) for each fixed $i \in \ell_\infty \setminus \{ \infty \}$,\, $m_{ij}, j \in \ell_\infty,$ are all the lines of $\Pi$ through the point $\infty$.

\end{lem}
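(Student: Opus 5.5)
The plan is to identify, in each part, the $n+1$ listed lines with the $n+1$ points of a suitable set that meets each line through $\infty$ exactly once. Two facts do the work. A projective plane of order $n$ has exactly $n+1$ lines through $\infty$. If $P$ is a line not through $\infty$, then joining $\infty$ to the points of $P$ is a bijection from the points of $P$ onto the lines through $\infty$. So in each part it suffices to show that the $n+1$ lines listed are pairwise distinct and pass through $\infty$; passing through $\infty$ holds by definition of the $m_{ij}$.

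For (a), $m_{\infty\infty}=\ell_\infty$ is the tangent at $\infty$. For $i\neq\infty$, $m_{ii}$ is the line joining $\infty$ to the point $x_{ii}\in C\setminus\{\infty\}$, so it is a secant through $\infty$.
\begin{itemize}
\item The lines $m_{ii}$, $i\neq\infty$, are pairwise distinct. A line through $\infty$ meets $C$ in at most one further point, so distinct points $x_{ii}$ give distinct lines.
\item None of them equals $\ell_\infty$, since $\ell_\infty$ meets $C$ only in $\infty$.
\end{itemize}
This gives $n+1$ distinct lines through $\infty$, hence all of them.

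For (b), fix $i\neq\infty$ and use $P=\ell_i$.
\begin{itemize}
\item First, $\infty\notin\ell_i$. Indeed $\ell_i$ is tangent at $x_{ii}\neq\infty$. If $\ell_i$ contained $\infty$, it would be a line through the point $\infty\in C$ meeting $C$ in two points, contradicting tangency.
\item Next, by the notation, $\ell_i=\{x_{ij}: j\in\ell_\infty\}$. The points $x_{ij}$ are pairwise distinct: if $x_{ij}=x_{ik}$ with $i,j,k$ distinct, then three tangents $\ell_i,\ell_j,\ell_k$ would be concurrent, which is impossible for odd $n$. The cases with $j=i$ are excluded because $x_{ii}\in C$ while $x_{ik}\notin C$ for $k\neq i$.
\item So $j\mapsto x_{ij}$ is a bijection from $\ell_\infty$ onto the points of $\ell_i$. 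Composing with the join-to-$\infty$ bijection, the lines $\infty x_{ij}$, $j\in\ell_\infty$, are exactly all the lines through $\infty$.
\item These lines agree with the $m_{ij}$. For $j\neq\infty$, $m_{ij}$ is by definition the line $\infty x_{ij}$. For $j=\infty$, the point $x_{i\infty}$ lies on $\ell_\infty$, so $\infty x_{i\infty}=\ell_\infty=m_{i\infty}$.
\end{itemize}

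The only step that needs care is the distinctness of the $x_{ij}$ for fixed $i$. This is exactly where the hypothesis that $n$ is odd enters, through the fact quoted in the introduction that no three tangents are concurrent, i.e. that $C^*$ is an oval of $\Pi^*$. Everything else is elementary counting in the plane.
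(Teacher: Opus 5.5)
Your proof is correct and is essentially the paper's argument. Both use the same two correspondences: lines through $\infty$ other than $\ell_\infty$ versus points of $C\setminus\{\infty\}$, and the perspectivity from $\ell_i$ onto the pencil at $\infty$. The paper checks surjectivity directly, writing an arbitrary line $m\ni\infty$ as $m_{jj}$ via $m\cap C=\{\infty,x_{jj}\}$ and as $m_{ij}$ via $m\cap\ell_i=\{x_{ij}\}$, whereas you check distinctness and count. Your explicit treatment of $\infty\notin\ell_i$ and of the case $j=\infty$ fills in details the paper leaves implicit. Your distinctness check for the $x_{ij}$ is redundant once you also quote $\ell_i=\{x_{ij}: j\in\ell_\infty\}$ from the notation, since $|\ell_i|=|\ell_\infty|=n+1$; either fact alone suffices.
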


\textbf{Remark}. When the order $n$ of $\Pi$ is even, the tangents to an oval $C$ are concurrent at a point. It follows that, for all $i \neq j \in \ell_\infty$, we have $m_{ij} = \ell_\infty$. Thus, Part (b) of Lemma 2.1 fails spectacularly for planes of even order.
We might begin Notation 2.1 with a dual oval $\{\ell_i : i \in \ell_\infty \}$ (i.e., a set of $n+1$ lines of $\Pi$ no three of which are concurrent) and take $C$ to be the set of all points $x$ of $\Pi$ such that $x$ belongs to exactly one line $\ell_i$ (Thus, when $n$ is odd, this coincides with the version of Notation 2.1 given above. However, when $n$ is even, $C$ is a line.) With this modification, Part (b) of Lemma 2.1 remains valid for $n$ even, but we get $m_{ii}= C$ for all $i \neq \infty$, so that Part (a) fails in this case. However, both parts of this Lemma are crucial for our construction. This explains why our construction fails for even $n$.

\textbf{Proof of Lemma 2.1}:
Fix an arbitrary line $m$ through $\infty$. It suffices to show that (a) there is an $i \in \ell_\infty$ such that $m=m_{ii}$, and, (b) for any fixed $i \neq \infty$, there is a $j$ such that $m=m_{ij}$. Both assertions are trivial when $m= \ell_\infty$. So take $m \neq \ell_\infty $. To prove (a), note that, since $\infty \in m \cap C$ and $\ell_\infty$ is the only tangent line to $C$ through $\infty$, it follows that $m$ is a secant to $C$. Thus $m \cap C = \{ \infty,x_{jj} \}$ for some index $j \neq \infty$. Then $m=m_{jj}$ for this index $j$. This proves part (a).

To prove (b), note that, since $\ell_i$ does not pass through $\infty$,  $\ell_i \neq m$. Let $x_{ij}$ be the common point of the lines $\ell_i$ and $m$. Then $m = m_{ij}$. This proves Part (b). $\Box$

We shall also need:
\begin{nota}

For $i \in \ell_\infty$, Let $r_i$ denote the complement in $\Pi$ of the line $m_{ii}$. For  $i \in \ell_\infty, \, t \in \ell_\infty \setminus \{ \infty \}$, we introduce the sets $r_i(t)$ as follows. 

\begin{equation*}
r_i(t):= \begin{cases} r_i^0(t) & \text{if} \; i=\infty \\
                      \{ t \} \cup r_i^0(t) & \text{otherwise,}
\end{cases}
\end{equation*}                      

where the sets $r_i^0(t)$ are defined by:

\begin{equation*} 
r_i^0(t) :=
\begin{cases} m_{tt} \setminus \{\infty\}& \text{if}\; i = \infty, \\
\ell_i \setminus \{ x_{ii}, x_{i\infty} \} & \text{if}\; i=t, \\
m_{it} \setminus \{\infty, x_{it} \}  &  \text{if} \; i \neq \infty,t.
\end{cases} 
\end{equation*}

\end{nota}

\textbf{Remark}: The proof of our main result (Theorem 2.1 below) begins by constructing  $(n+1) \times n$ sub-arrays $A_t, t \in \ell_\infty \setminus \{ \infty \}$. The triple array $A$ will be obtained by juxtaposing these sub-arrays. The sets $r_i(t), i \in \ell_\infty$, of Notation 2.2 and the sets $c_j(t), j \in \ell_\infty \setminus \{ \infty \}$, of Notation 2.3 below  will be the blocks of the two marginals of $A_t$. This is the importance of these sets. The properties of these sets established in the following two lemmas ensure that (i) there indeed is a sub-array with these marginals (indeed, these lemmas imply that the array $A_t$ is uniquely determined by the requirement that it has these prescribed marginals) and (ii) the final array $A$ thus created has the desired marginals predicted by Theorem 1.3.

\begin{lem}

(a) For each fixed $i \in \ell_\infty$, $r_i(t), t \in \ell_\infty \setminus \{ \infty \}$, are $n$ pairwise disjoint sets  of size $n$ each, and \, $\bigcup_{t \in \ell_\infty \setminus \{ \infty \}} r_i(t)= r_i$. \\
(b) For each fixed $t \in \ell_\infty \setminus \{ \infty \}$, the $n+1$ sets $r_i^0(t), i \in \ell_\infty,$ are pairwise disjoint ($n$ of them of size $n-1$ each, and one is of size $n$) and $\bigcup_{i \in \ell_\infty}r_i^0(t)=r_\infty.$
\end{lem}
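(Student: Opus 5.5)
Both parts reduce to incidence counting in $\Pi$ together with Lemma 2.1. For part (a) I fix $i$ and split into the cases $i=\infty$ and $i\neq\infty$. When $i=\infty$, each $r_\infty(t)=m_{tt}\setminus\{\infty\}$ has size $n$. By Lemma 2.1(a) the lines $m_{tt}$, $t\neq\infty$, are exactly the $n$ lines through $\infty$ other than $m_{\infty\infty}=\ell_\infty$. Distinct lines through $\infty$ meet only in $\infty$, so the sets $r_\infty(t)$ are pairwise disjoint. Their union is every point not on $\ell_\infty$, i.e. $r_\infty$.

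When $i\neq\infty$, I first check sizes. For $t=i$, the set $r_i(i)=\{i\}\cup(\ell_i\setminus\{x_{ii},x_{i\infty}\})$ has size $n$, because $i\notin\ell_i$ by the choice of indexing. For $t\neq i$, the set $r_i(t)=\{t\}\cup(m_{it}\setminus\{\infty,x_{it}\})$ has size $n$. Here I must check that $t\notin m_{it}$: the only point of $\ell_\infty$ on $m_{it}\neq\ell_\infty$ is $\infty$. I also need $m_{it}\neq\ell_\infty$, which holds because $x_{it}\notin\ell_\infty=\ell_\infty$'s tangent line (the point $x_{it}$ lies on $\ell_\infty$ only if $t$ or $i$ equals $\infty$).

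Next I show these $n$ sets cover the $n^2$ points of $r_i=\Pi\setminus m_{ii}$. By counting, covering is equivalent to disjointness. The points of $\ell_\infty\setminus\{\infty\}$ are covered once each by the singletons $\{t\}$; they are not on $m_{ii}$, and they are not in any $r_i^0(t)$ (points of $m_{it}$ other than $\infty$ lie off $\ell_\infty$, and $\ell_i\cap\ell_\infty=x_{i\infty}$ is removed). For points off $\ell_\infty$ and off $m_{ii}$, I use Lemma 2.1(b): the lines $m_{ij}$, $j\in\ell_\infty$, are all the lines through $\infty$, with $m_{ii}$ among them and $m_{i\infty}=\ell_\infty$. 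So each such point $y$ lies on exactly one $m_{it}$ with $t\neq i,\infty$, or else on $m_{ii}$ (excluded). This still leaves the points $x_{it}$ and the points of $\ell_i$ to account for. The clean way is to view $m_{it}$ as the line through $\infty$ meeting $\ell_i$ at $x_{it}$. Then the sets $m_{it}\setminus\{\infty,x_{it}\}$ ($t\neq i,\infty$), together with $\ell_i\setminus\{x_{ii},x_{i\infty}\}$, partition the points off $\ell_\infty\cup m_{ii}$. The point $x_{ii}$ lies on $m_{ii}$, and every other point of $\ell_i$ off $\ell_\infty$ is some $x_{it}$ with $t\neq i,\infty$. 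This is the step needing most care: tracking the removed points $x_{it}$ and checking that $x_{it}\notin m_{ii}$ for $t\neq i$, which holds because $m_{ii}\cap\ell_i=\{x_{ii}\}$.

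For part (b) I fix $t$. The set $r_\infty^0(t)$ has size $n$, and the others have size $n-1$ by the same computations. Their total size is $n+n(n-1)=n^2=|r_\infty|$, so again it suffices to show disjointness (or covering) within $\Pi\setminus\ell_\infty$. A point $y\notin\ell_\infty$ lies on the unique line $\infty y$. If this line is $m_{tt}$, then $y\in r^0_\infty(t)$; it cannot also be in $\ell_t$ minus the removed points, since $m_{tt}\cap\ell_t=x_{tt}$, nor in $m_{it}$ for $i\neq t$, since $m_{it}\neq m_{tt}$. The last claim follows from Lemma 2.1(b) with the roles of the indices swapped, using $m_{it}=m_{ti}$: the map $i\mapsto m_{ti}$ is a bijection. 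If instead $\infty y=m_{ti}$ with $i\neq t,\infty$, then either $y=x_{ti}$, in which case $y\in\ell_t\setminus\{x_{tt},x_{t\infty}\}=r_t^0(t)$, or $y\in r_i^0(t)$. Uniqueness follows since the lines through $\infty$ are distinct and $\ell_t$ meets each $m_{ti}$ only at $x_{ti}$. This completes the partition of $r_\infty$.
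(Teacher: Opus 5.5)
Your proof is correct and is essentially the paper's argument. The sizes come from lines having $n+1$ points, and disjointness and the union come from Lemma 2.1 plus the fact that two distinct lines meet once, closed by the count $n\cdot n=n^2=|r_i|$; you simply carry out pointwise the verification the paper calls immediate, by assigning each $y\in r_i$ to the unique line $\infty y=m_{it}$ and rerouting $y=x_{it}$ to $\ell_i$.

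One claim is asserted without a reason: $x_{it}\notin\ell_\infty$ for $i\neq t$, i.e.\ $m_{it}\neq\ell_\infty$. It needs either the odd-order fact that no three tangents are concurrent, or Lemma 2.1(b) directly, since the $m_{ij}$, $j\in\ell_\infty$, are distinct and $m_{i\infty}=\ell_\infty$.
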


\textbf{Proof:} Since each line of $\Pi$ has size $n+1$ and any two distinct lines meet in a unique point, parts (a) and (b) are immediate from Lemma 2.1. Perhaps the only statement that requires comment is the assertion that , for fixed $i$, the union of the sets $r_i(t), t \in \ell_\infty \setminus \{ \infty \},$ is the set $r_i$. Indeed, once we verify that these are $n$ pairwise disjoint sets of size $n$ each, it follows that their union is of size $n^2$. It is also easy to see that each of these sets is disjoint from the line $m_{ii}$. Hence their union is contained in the complement $r_i$ of $m_{ii}$. Since $r_i$ is also of size $n^2$, the equality follows. The assertion, for fixed $t$,  on the union of the $n+1$ sets $r_i^0(t), i \in \ell_\infty$, may be proved similarly.  $\Box$

\begin{nota}
For any $t \in \ell_\infty \setminus \{ \infty \}$, we index the $n$ lines $\neq \ell_\infty$ through the point $t$ arbitrarily as $c_j(t),\, j \in \ell_\infty \setminus \{ \infty \}$. We define a function $\sigma_t : \ell_\infty \rightarrow \ell_\infty$ as follows. We define $\sigma_t(\infty)= \infty$. Next let $i \in \ell_\infty \setminus \{ \infty \} $. Then $x_{it} \notin \ell_\infty$. Hence, there is a unique index $j \in \ell_\infty \setminus \{ \infty \}$ such that $x_{it} \in c_j(t)$. Namely, this is the index for which $c_j(t)$ is the line joining the points $t \in \ell_\infty \setminus \{ \infty \}$ and $x_{it}$. We put $\sigma_t(i)=j$. In other words, for $i \neq \infty$,\, $\sigma_t(i)$ is the index $j$ determined by the requirement that the three lines $\ell_t, m_{it} \,\text{and} \,  c_j(t)$ are concurrent.  
\end{nota}

\begin{lem} 
 $\sigma_t$  is a permutation of $\ell_\infty$. It satisfies:  ($\sigma_t(\infty)=\infty$, and) for all $i \in \ell_\infty, \;  j \in \ell_\infty \setminus \{ \infty \}$, we have
\begin{equation*}
 | r_i^0(t) \cap c_j(t)| = \begin{cases} 0 & \text{if} \; j = \sigma_t(i),\\ 1 & \text{otherwise.}
\end{cases} \end{equation*}  

\end{lem}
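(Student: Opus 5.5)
The plan is to use only the incidence axioms of $\Pi$ together with two facts already noted: no three tangents $\ell_i$ are concurrent (because $n$ is odd), and $\ell_\infty$ meets $C$ only in $\infty$. The basic observation is this. Each $c_j(t)$ is a line through $t$ distinct from $\ell_\infty$, so $c_j(t)\cap\ell_\infty=\{t\}$. Hence $c_j(t)$ meets any line $L$ not passing through $t$ in exactly one point, and that point is not on $\ell_\infty$ unless it is $L\cap\ell_\infty$. Each set $r_i^0(t)$ is such a line $L$ with one or two points removed. So $|r_i^0(t)\cap c_j(t)|$ equals $1$ or $0$, according as the unique point of $L\cap c_j(t)$ is or is not among the removed points. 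I will split into the three cases of Notation 2.2.

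\emph{Case $i=\infty$.} Here $L=m_{tt}$. Since $x_{tt}\in C\setminus\{\infty\}$ and $\ell_\infty$ is tangent at $\infty$, we have $x_{tt}\notin\ell_\infty$, so $m_{tt}\neq\ell_\infty$. Therefore $t\notin m_{tt}$, since otherwise $m_{tt}$ would contain both $\infty$ and $t$ and equal $\ell_\infty$. The point $m_{tt}\cap c_j(t)$ is not $\infty$, because $\infty\in\ell_\infty$ and $\infty\neq t$. So the intersection has size $1$ for every $j\neq\infty$, which is consistent with $\sigma_t(\infty)=\infty$.

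\emph{Case $i=t$.} Here $L=\ell_t$, and $t\notin\ell_t$ by the choice of indexing. The removed point $x_{t\infty}$ lies on $\ell_\infty$ and differs from $t$, so it is never on $c_j(t)$. Hence the intersection is empty exactly when $c_j(t)$ passes through $x_{tt}$, that is, when $j=\sigma_t(t)$.

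\emph{Case $i\neq\infty,t$.} Here $L=m_{it}$. The one delicate point is to show $t\notin m_{it}$. If $t\in m_{it}$, then $m_{it}=\ell_\infty$, so $x_{it}\in\ell_\infty\cap\ell_t=\{x_{t\infty}\}$. Then $\ell_i,\ell_t,\ell_\infty$ would be concurrent, contradicting the oddness of $n$. With this settled, the argument runs as before: $\infty$ is never on $c_j(t)$, and the intersection is empty exactly when $x_{it}\in c_j(t)$, i.e.\ when $j=\sigma_t(i)$.

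It remains to show that $\sigma_t$ is a permutation. Since $\sigma_t$ fixes $\infty$ and maps $\ell_\infty\setminus\{\infty\}$, which has $n$ elements, into itself, it suffices to prove injectivity there. Suppose $i\neq i'$, both different from $\infty$, and $\sigma_t(i)=\sigma_t(i')=j$. The points $x_{it}$ and $x_{i't}$ both lie on $\ell_t$, and they are distinct. Indeed, if one of $i,i'$ equals $t$, one point lies on $C$ and the other does not. Otherwise, equality of the two points would make three tangents concurrent. So $c_j(t)$ would contain two distinct points of $\ell_t$ and hence equal $\ell_t$, which is impossible since $t\notin\ell_t$.

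I expect the main obstacle to be keeping track of which lines avoid $t$ and which removed points lie on $\ell_\infty$. The only genuinely nontrivial ingredient is the non-concurrency of the tangents, which is where the odd order is used, both in the third case and in the injectivity argument.
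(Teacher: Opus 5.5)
Your proof is correct and follows essentially the paper's argument: the same three-case analysis, viewing $r_i^0(t)$ as a line minus one or two points that $c_j(t)$ either hits or misses. The one variation is that you prove $\sigma_t$ injective on $\ell_\infty\setminus\{\infty\}$ (distinct points $x_{it}$ on $\ell_t$, and $c_j(t)\neq\ell_t$), whereas the paper proves surjectivity via Lemma 2.1(b); both then finish by pigeonhole. One small slip: in your opening paragraph, ``equals $1$ or $0$, according as the point is or is not among the removed points'' has the two values swapped, although each of your three cases uses the correct direction.
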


\textbf{Proof}
  
  Fix $j \in \ell_\infty \setminus \{\infty \}$. Let $y_{jt}$ be the unique point in $c_j(t) \cap \ell_t$. Since $t \notin \ell_t$ by our choice of the indexing of the tangents to $C$, and  $c_j(t) \cap \ell_\infty = \{ t \}$, it follows that $y_{jt} \notin \ell_\infty$. Therefore, by Lemma 2.1(b) there is a unique index $i \neq \infty$ such that $m_{it}$ is the line joining the points $\infty$ and $y_{jt}$. By construction, the lines $\ell_t,\, m_{it} \, \text{and} \, c_j(t)$ are concurrent at the point $y_{jt}$. Hence $\sigma_t(i)=j$ for this choice of $i$. Since $j \in \ell_\infty \setminus \{ \infty \}$ was arbitrary and since, also, $\sigma_t(\infty)=\infty$, it follows that the map $\sigma_t : \ell_\infty \rightarrow \ell_\infty$ is onto. Therefore, by the pigeonhole principle, $\sigma_t$ is a bijection. 

Since $c_j(t) \cap \ell_\infty = \{ t \}$ and $m_{tt} \cap \ell_\infty = \{ \infty \}$, it follows that $c_j(t) \neq m_{tt}$. Hence the lines $c_j(t)$ and $m_{tt}$ meet at a unique point, which is not $\infty$. Since $r_\infty^0(t) = m_{tt} \setminus \{ \infty \}$, it follows that $|r_\infty^0(t) \cap c_j(t)|=1.$

Next let $i,j, t \in \ell_\infty \setminus \{ \infty \}$. First consider the case $j = \sigma_t(i)$.  Then we have (i) if $i=t$ then  $c_j(t)$ meets $\ell_i$ at the point $x_{ii}$, and (ii) if $i \neq t$ then $c_j(t)$ meets $m_{it}$ at the  point  $x_{it}$. Therefore, by the definition of $r_i^0(t)$ it follows that, in either case, $c_j(t)$ is disjoint from $r_i^0(t)$ when $j=\sigma_t(i).$ Next let $j \neq \sigma_t (i)$. Then $c_j(t)$ does not pass through the point  $x_{it}$. Also $c_j(t)$ does not pass through the point  $x_{t\infty}$ (since $c_j(t) \cap \ell_\infty = \{ t \}$ and $t \neq x_{t\infty}$ by our choice of the indexing) nor through the point $\infty$.  Therefore, in this case, $r_i^0(t)\cap c_j(t)$ is the singleton set $c_j(t) \cap \ell_t$ if $i=t$ and it is the singleton set $c_j(t) \cap m_{it}$ if $i \neq t$.  $\Box$

We shall now construct the required Triple array.

\begin{theo} Let $\Pi$ be a projective plane of odd order $n$. Let $\infty$ be a point of $\Pi$, and suppose there is an oval $C$ of $\Pi$ such that $\infty \in C$. Then the un-ordered extremal triple array $\mathbb{A}(\Pi, \infty)$ is orderable. That is, there is an $(n+1) \times n^2$ extremal triple array $A$ such that $\mathbb{A}(\Pi, \infty)$ is the pair of marginals of $A$.
\end{theo}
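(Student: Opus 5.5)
The plan is to build $A$ explicitly, one block of columns at a time, and then check that its marginals are exactly $\mathbb{A}(\Pi,\infty)$. By the converse direction of Theorem 1.2 this already makes $A$ a triple array. Extremality then follows from Theorem 1.3, or directly from $v=n^2+n=(n+1)+n^2-1$.

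\emph{Indexing and target marginals.} Rows of $A$ are indexed by $i\in\ell_\infty$. Columns are indexed by pairs $(t,j)$ with $t,j\in\ell_\infty\setminus\{\infty\}$, giving $n^2$ columns. The symbols are the $n^2+n$ points of $\Pi$ other than $\infty$. The target row design has blocks $r_i$, the complements of the $n+1$ lines $m_{ii}$ through $\infty$ (Lemma 2.1(a)). The target column design has blocks equal to the $n^2$ lines not through $\infty$. Each such line meets $\ell_\infty$ in a unique point $t\neq\infty$, so these lines are exactly the $c_j(t)$ for $t,j\in\ell_\infty\setminus\{\infty\}$.

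\emph{The sub-arrays $A_t$.} Fix $t$ and define the $(n+1)\times n$ array $A_t$ by
$$A_t(i,j)=\begin{cases} t & \text{if } i\neq\infty \text{ and } j=\sigma_t(i),\\ \text{the unique point of } r_i^0(t)\cap c_j(t) & \text{otherwise.}\end{cases}$$
Lemma 2.3 guarantees that the second case is well defined. The naive rule "take the point of $r_i(t)\cap c_j(t)$" fails, because $t\in c_j(t)$ always, so for $i\neq\infty$ that intersection has size $2$ when $j\neq\sigma_t(i)$. The permutation $\sigma_t$ is used to place $t$ exactly once in each row $i\neq\infty$ and exactly once in each column.

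\emph{Verifying $A_t$.} First note that $t\notin r_i^0(t)$ for every $i$, since each $r_i^0(t)$ avoids $\ell_\infty$: the point $\infty$, respectively $x_{t\infty}$, has been removed.
\begin{itemize}
\item Row $i\neq\infty$ contains $t$ once. Its other $n-1$ entries are points of $r_i^0(t)$, one on each line $c_j(t)$ with $j\neq\sigma_t(i)$. These lines pairwise meet only in $t$, so the entries are distinct and exhaust $r_i^0(t)$, which has size $n-1$. Hence row $i$ has symbol set exactly $r_i(t)$.
\item Row $\infty$ similarly has symbol set exactly $r_\infty^0(t)=r_\infty(t)$.
\item Column $j$ contains $t$ once, at row $\sigma_t^{-1}(j)$. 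For each of the other $n$ rows it contains a point of $c_j(t)\setminus\{t\}$. By Lemma 2.2(b) the sets $r_i^0(t)$ are pairwise disjoint, so these $n$ points are distinct. Hence column $j$ has symbol set exactly $c_j(t)$, and it is binary.
\end{itemize}

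\emph{Assembling $A$.} Juxtapose the arrays $A_t$ for $t\in\ell_\infty\setminus\{\infty\}$. Each column of $A$ is a line $c_j(t)$, and these are all the lines not through $\infty$, each occurring once. Row $i$ of $A$ is the union of the sets $r_i(t)$. By Lemma 2.2(a) these are disjoint and their union is $r_i$, so each row has $n^2$ distinct symbols forming the block $r_i$. Thus the marginals of $A$ are $\mathbb{A}(\Pi,\infty)$, which is an un-ordered triple array by Theorem 1.3, and Theorem 1.2 finishes the proof.

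The main obstacle is the step that $A_t$ has the prescribed column sets. It needs both the disjointness of the sets $r_i^0(t)$ and the "exactly one miss per row" pattern from Lemma 2.3, which is precisely where $\sigma_t$ being a permutation, and hence the oddness of $n$, is used.
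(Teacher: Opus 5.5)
Your proposal is correct and follows the paper's proof essentially step by step. Both build the sub-arrays $A_t$ with $t$ placed via $\sigma_t$ and the remaining entries taken as the unique points of $r_i^0(t)\cap c_j(t)$, check binarity and the marginals $r_i(t)$ and $c_j(t)$ using Lemmas 2.2 and 2.3, then juxtapose the $A_t$ and invoke Theorem 1.2 once the marginals are identified with $\mathbb{A}(\Pi,\infty)$.
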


\textbf{Proof:} Fix $t \in \ell_\infty \setminus \{ \infty \}$. We construct the $(n+1) \times n$ array $A_t$ as follows. The rows and columns of $A_t$ are indexed by the points in $\ell_\infty$ and $\ell_\infty \setminus \{ \infty \}$, respectively. For $i \in \ell_\infty, j \in \ell_\infty \setminus \{ \infty \}$, the $(i,j)-$th entry of $A_t$ is given by

\begin{equation} \label{equation} A_t(i,j):= \begin{cases} t & \text{if} \,  j = \sigma_t(i), \\
z_{i,j}(t) & \text{otherwise;}
\end{cases} 
\end{equation}

where $z_{i,j}(t)$ is the unique element of $r_i^0(t) \cap c_j(t)$ in the second case. This is well defined by Lemma 2.3. Indeed, by the proof of this lemma, $z_{i,j}(t)$ is the common point of the lines  $c_j(t)$ and $\ell$, where (i) $\ell=m_{tt}$ when $i=\infty$, (ii)  $\ell = \ell_t$ when $i=t$, and (iii)   $\ell=m_{it}$ when $i \neq \infty,t$.

Notice that, since, by Lemma 2.2(b), each $r_i^0(t)$ is disjoint from $\ell_\infty$,  the only element of $\ell_\infty$ which occurs as an entry of the array $A_t$ is the point $t$. Also, as $\sigma_t$ is a permutation of $\ell_\infty$ and $\sigma_t(\infty)=\infty$ (Lemma 2.3), it follows that the entry $t$ occurs in exactly one position of each column of $A_t$. Also, $t$ occurs exactly once in each row of $A_t$, except for the $\infty-$th row where $t$ does not occur. Since the sets $r_i^0(t), i \in \ell_\infty$, are pairwise disjoint (Lemma 2.2(b)) and $t \notin r_i^0(t)$ for all $i$ (indeed, each $r_i^0(t)$ is disjoint from $\ell_\infty$), it follows that the $n+1$ entries in each column of $A_t$ are distinct. Also, as the lines $c_j(t), j \in \ell_\infty \setminus \{ \infty \}$, meet pairwise at $\{ t \}$, the $n$ entries in each row of $A_t$ are distinct. Thus $A_t$ is an $(n+1) \times n$ binary array. Letting $D_R(t)$ and $D_C(t)$ denote the row-design and the column-design (respectively) of $A_t$, it follows that each block  of $D_R(t)$ is of size $n$ and each block of $D_C(t)$ is of size $n+1$. But, by construction, the $i-$th block of $D_R(t)$ is contained in the set $r_i(t)$ of size $n$ (Lemma 2.2(a)) and the $j-$th block of $D_C(t)$ is contained in the set $c_j(t)$ of size $n+1$. Therefore the blocks of $D_R(t)$ are the sets $r_i(t), i \in \ell_\infty$, and the blocks of $D_C(t)$ are the sets $c_j(t), j \in \ell_\infty \setminus \{ \infty \}$.

Finally, define the $(n+1) \times n^2$ array $A$ by $A :=(A_t : t \in \ell_\infty \setminus \{ \infty \})$ obtained by juxtaposing the $n$ sub-arrays $A_t$ in a row. Thus, the rows of $A$ are indexed by the points in $\ell_\infty$, the columns of $A$ are indexed by the ordered pairs in $(\ell_\infty \setminus \{ \infty \}) \times ( \ell_\infty \setminus \{ \infty \})$. For $i \in \ell_\infty, j,t \in \ell_\infty \setminus \{ \infty \}$, the entry of the array $A$ in the intersection of the $i$-th row and $(t,j)$-th column is given by $A(i, (t,j)):= A_t(i,j)$.
               
Since, by Lemma 2.2(a), $r_i$ is the disjoint union of the $i$-th blocks of   $D_R(t), t \in \ell_\infty \setminus \{ \infty \}$, it follows that $A$ is a binary array and  the $i-$th block of the row design $D_R^A$ of $A$ is $r_i$. Since $r_i$ is by definition the complement of the line $m_{ii}$, and -- by Lemma 2.1(a) -- \; $ m_{ii}, i \in \ell_\infty,$ are the lines through $\infty$, it follows that the blocks of $D_R^A$ are precisely the complements of the lines of $\Pi$ through the point $\infty$. Also, clearly, the blocks of the column design $D_C^A$ of $A$ are the lines $c_j(t), (t,j) \in (\ell_\infty \setminus \{ \infty \}) \times (\ell_\infty \setminus \{ \infty \}).$  Since any line not passing through $\infty$ meets the line $\ell_\infty$ at some point $t \neq \infty$, it follows that the blocks of $D_C^A$ are precisely the lines not passing through $\infty$. Thus, $(D_R^A, D_C^A) = \mathbb{A}(\Pi, \infty)$ (recall Notation 1.1). Therefore,the extremal un-ordered triple array  $\mathbb{A}(\Pi, \infty)$ is orderable and,  by Theorem 1.2, $A$ is an $(n+1) \times n^2$ extremal triple array. $\Box$

\textbf{ Example: the desarguesian case.}

We now proceed to present an explicit description of the triple array constructed above in the case where $\Pi$ is a desarguesian plane, i.e., projective plane over a finite field. To set up our notations, we begin by recalling the usual construction of these planes by the projectivization of the corresponding affine planes.

\begin{nota} Let $s$ be a prime power, and $F_s$ be the field of order s. Let $AG(2,s)$ denote, as usual, the affine plane of order $s$, described by cartesian co-ordinates $(X,Y)$ over this field. Thus the point set of $AG(2,s)$ is $F_s \times F_s$, and the lines of $AG(2,s)$ are given by inhomogeneous linear equations (with coefficients from $F_s$) in the variables $X,Y.$ We put $\ell_\infty = \{ \infty \} \sqcup F_s$. Also, for any line $\ell $ of $AG(2,s)$, we define the \textbf{ projective closure} $ \overline{\ell}$ of $\ell$ as the set $\{t\} \sqcup \ell$, where $t \in \ell_\infty$ is the slope of $\ell$. The points of the projective plane $PG(2,s)$ over $F_s$ are the points of $AG(2,s)$ as well as the elements of $\ell_\infty$ (the points at infinity). The lines of $PG(2,s)$ are $\ell_\infty$ (the line at infinity) as well as the projective closures  of the lines of $AG(2,s)$.
\end{nota}

From now on, $s$ is an odd prime power. We choose the oval $C$ to be the conic in $PG(2,s)$ given as $C := \{ \infty \} \cup \{ (x,y) \in F_s \times F_s: x^2=4(y-x) \}. $  The choice of $\infty$ as the base point and the choice of the oval $C$ are  immaterial since by Segre's celebrated theorem in \cite{seg}, all the ovals in $PG(2,s)$, $s$ odd, are conics and since the collineation group $PGL(3,s)$ of $PG(2,s)$ is transitive 
on the pairs $(\infty, C)$ where $\infty$ is a point and $C$ is a conic through $\infty$. Different choices yield isomorphic triple arrays.

It is easy to compute that the tangents to our chosen oval $C$ are the lines $\ell_t, t \in F_s \sqcup \{ \infty \},$ where $\ell_\infty$ is as in Notation 2.4 and, for $t \in F_s$, $\ell_t$ is the projective closure of the affine line given by the equation $Y= (t+1)X-t^2$. Notice that we indeed have $t \notin \ell_t$ for $t \neq \infty$. For  $t \neq \infty$, the affine points on $\ell_t$ are the points $(i+t, it +i+t),\, i \in F_s$. Thus, for $i \neq t \in F_s$, the common point of $\ell_i$ and $\ell_t$ is the point $x_{it}=(i+t, i+t + it )$. Also, for $t \neq \infty$, the point of contact between $\ell_t$ and $C$ is the point 
$x_{tt}=(2t, t(t+2))$. Therefore, in this case, the lines $m_{it}$ of Notation 2.1 are given as follows. $m_{i\infty} = \ell_\infty = m_{\infty i}$ for all $i$, and, for $i,t \neq \infty$, $m_{it}$ is the projective closure of the affine line given by the equation $X=i+t$. 

Following Notation 2,3, we  find that, for $t \neq \infty$, the permutation $\sigma_t$ of $\ell_\infty$ is now given by the formula ( $\sigma_t(\infty)= \infty$ and) $\sigma_t(i)= i + t(1-t)$ for all $i \neq \infty$. Let us parametrize the lines $ \neq \ell_\infty$ passing through the point $t \in F_s$ as $c_j(t), j \in F_s,$ where $c_j(t)$ is the projective closure of the affine line given by the equation $Y= tX +j.$  The rows and columns of the $(s+1) \times s^2$ array $A$ of Theorem 2.1 are now indexed by the elements of  $F_s \sqcup \{ \infty \}$ and of $F_s \times F_s$, respectively. Using Equation \ref{equation}, It is now easy to compute that, in this case, the array is given explicitly as follows. For $i \in F_s \sqcup \{ \infty \}, (t,j) \in F_s \times F_s$, the $(i, (t,j))$-th entry of $A$ is given by

$$ A(i, (t,j)) = A_t(i,j)= \begin{cases} (2t, 2t^2 +j) & \text{if} \; i = \infty,\\
t & \text{if} \; j-i = t(1-t), \\
(t^2+j, t(t^2+j)+j) & \text{if} \; i=t,j -i \neq t(1-t), \\
(t+i, t(t+i)+j) & \text{otherwise}. 
\end{cases} $$

By Theorem 2.1, this is an $(s+1) \times s^2$ extremal triple array for any odd prime power $s$. Its pair of marginals is the un-ordered triple array $\mathbb{A}(PG(2,s), \infty)$.

\end{document}